\documentclass[10pt]{amsart}
\usepackage{amsmath, amssymb}
\usepackage{mathrsfs}
\usepackage{mathabx}
\newcommand{\no}[1]{#1}
\renewcommand{\no}[1]{}
\no{\usepackage{times}\usepackage[subscriptcorrection, slantedGreek, nofontinfo]{mtpro}
\renewcommand{\Delta}{\upDelta}}
\usepackage{color}

%=========================================================

 \setlength{\marginparwidth}{0.6in}
\date{\today}
\setlength{\oddsidemargin}{0.0in}
\setlength{\evensidemargin}{0.0in}
\setlength{\textwidth}{6.5in}
\setlength{\topmargin}{0.0in}
\setlength{\textheight}{8.5in}

\newtheorem{theorem}{Theorem}[section]
\newtheorem{proposition}{Proposition}[section]
\newtheorem{lemma}{Lemma}[section]

\newtheorem{corollary}{Corollary}[section]

\theoremstyle{remark}
\newtheorem{remark}{Remark}[section]

\numberwithin{equation}{section}

%=========================================================

\title[H\"older stability]{H\"older stability in determining the potential and the damping coefficient in a wave equation}

\author[Ka\"{\i}s Ammari]{Ka\"{\i}s Ammari}
\address{Ka\"{\i}s Ammari, UR Analysis and Control of PDEs, UR 13ES64, Department of Mathematics, Faculty of Sciences of Monastir, University of Monastir, 5019 Monastir, Tunisia }
\email{kais.ammari@fsm.rnu.tn}

\author[Mourad Choulli]{Mourad Choulli}
\address{Mourad Choulli, Institut \'Elie Cartan de Lorraine, UMR CNRS 7502, Universit\'e de Lorraine, Boulevard des Aiguillettes, BP 70239, 54506 Vandoeuvre les Nancy cedex - Ile du Saulcy, 57045 Metz cedex 01, France}
\email{mourad.choulli@univ-lorraine.fr}

\author[Faouzi Triki]{Faouzi Triki$^\dag$}
\address{Laboratoire Jean Kuntzmann,  UMR CNRS 5224, 
Universit\'e  Grenoble-Alpes, 700 Avenue Centrale,
38401 Saint-Martin-d'H\`eres, France}
\email{Faouzi.Triki@imag.fr}

\date{}

\begin{document}

\begin{abstract}
We improve the preceding results obtained by the first and the second
authors in \cite{AC1}. They concern the stability issue of the inverse
problem that consists in determining the potential and the damping
coefficient in a wave equation from an initial-to-boundary
operator. We partially modify the arguments in \cite{AC1} to show that actually we have H\"older stability instead of logarithmic stability.

\end{abstract}

\subjclass[2010]{35R30}

\keywords{inverse problem, H\"older stability, wave equation, damping coefficient, potential. \\ 
$^\dag$ FT is partially supported by Labex PERSYVAL-Lab (ANR-11-LABX-0025-01)}

\maketitle

%\tableofcontents

\section{Introduction}\label{section1}

Let $M=(M,g)$ be a compact $n$-dimensional Riemannian manifold with boundary. Recall that, in local coordinates $x=(x^1,\ldots ,x^n)$,
\[
g=g_{ij}dx^i\otimes dx^j.
\]
Here and henceforth, we adopt the Einstein convention summation for repeated indices. 

\smallskip
For two vector fields $X$ and $Y$ over $M$,
\[
\langle X,Y\rangle = g_{ij}X^iY^j,
\]
when $X=X^i\partial _i$ and $Y=Y^j\partial _j$, where $(\partial _1,\ldots ,\partial _n)$ is the dual basis to $(x^1,\ldots ,x^n)$. Set then $|X|=\sqrt{\langle X,X\rangle}$.

\smallskip
Recall that the gradient of $u\in C^\infty (M)$ is the vector field given by
\[
\nabla u= g^{ij}\partial _iu\partial _j 
\]
and the Laplace-Beltrami operator is the operator acting as follows
\[
\Delta u=|g|^{-1/2}\partial _i\left( |g|^{1/2}g^{ij}\partial _ju\right),
\]
where $(g^{ij})$ is the inverse of the metric $g$ and $|g|$ is the determinant of $g$.

%\smallskip
%In the rest of this text, all the functions we use are real-valued.

\smallskip
Consider the following initial-boundary value problem, abbreviated to IBVP in the sequel, for the wave equation:
\begin{equation}\label{e1}
\left\{
\begin{array}{lll}
 \partial _t^2 u - \Delta u + q(x)u + a(x) \partial_t u = 0 \;\; &\mbox{in}\;   Q=\ring{M} \times (0,\tau), 
 \\
u = 0 &\mbox{on}\;  B =\partial M \times (0,\tau), 
\\
u(\cdot ,0) = u_0,\; \partial_t u (\cdot ,0) = u_1.
\end{array}
\right.
\end{equation}

Let $\mathcal{H}=H_0^1(\ring{M}) \oplus L^2(\ring{M})$. The analysis carried out in \cite[sections 5 and 6, Chapter XVIII]{DL} (see also \cite[Chapter 2] {BY}) enables us to deduce that, for any $q,a\in L^\infty (\Omega  )$, $\tau >0$ and $(u_0,u_1)\in \mathcal{H}$, the IBVP \eqref{e1} has a unique solution \[ u:=\mathcal{S}_{q,a}(u_0,u_1)\in C([0,\tau ],H_0^1(\ring{M}  ))\] so that $\partial _tu\in C([0,\tau ],L^2(\ring{M}  ))$. Additionally, as a consequence of the energy estimate,  
\begin{equation}\label{e2}
\|u\|_{C([0,\tau ],H_0^1(\ring{M} ))}+\|\partial _t u\|_{C([0,\tau ],L^2((\ring{M}  ))}\leq C\|(u_0,u_1)\|_{\mathcal{H}}.
\end{equation}
Here $C=C(\|q\|_\infty +\|a\|_\infty )$ is a nondecreasing function.

\smallskip
Denote by $\nu$ the  unit normal vector field pointing inward $\ring{M}$ and set $\partial _\nu u =\langle \nabla u,\nu\rangle$. From \cite[Lemma 2.4.1]{BY}, $\partial _\nu u\in L^2(B)$ and
\begin{equation}\label{e3}
\|\partial_\nu u\|_{L^2(B)}\le c_M\left( \|(u_0,u_1)\|_{\mathcal{H}} +\|qu + a\partial_t u\|_{L^1((0,\tau) , L^2(\ring{M})} \right),
\end{equation}
where $c_M$ is a constant depending only on $M$.

\smallskip
A combination of \eqref{e2} and \eqref{e3} yields
\begin{equation}\label{e4}
\|\partial_\nu u\|_{L^2(B)}\le C\|(u_0,u_1)\|_{\mathcal{H}},
\end{equation}
with a constant $C$ of the same form as in \eqref{e2}.

\smallskip
Pick $\Gamma$, a non empty open subset of $\partial M$, and $\tau >0$. In all of this paper, we assume that  $(\Gamma ,\tau)$ geometrically control $M$. We refer to \cite{Le} for a precise definition of this assumption. It is worth mentioning that the notion of geometric control was introduced by Bardos, Lebeau and Rauch in \cite{BLR}. 

\smallskip
Fix $(q_0,a_0)\in L^\infty (\ring{M})\times L^\infty (\ring{M})$. In light of \cite[theorem page 169]{Le} (which remains valid for the wave operator plus an operator involving space derivatives of first order) and bearing in mind that controllability is equivalent to observability, we can state the following inequality
\begin{equation}\label{e5}
2\kappa \| (u_0,u_1)\|_{\mathcal{H}} \le \| \partial _\nu \mathcal{S}_{q_0,a_0}(u_0,u_1)\|_{L^2(\Gamma \times (0,\tau ))},
\end{equation}
for some constant $\kappa >0$.

\smallskip
By the perturbation argument  in \cite[Proposition 6.3.3, page 189]{TW}, we assert that there exists $\beta >0$ so that, for any $(q,a)=(q_0,a_0)+(\widetilde{q},\widetilde{a})$, with $(\widetilde{q},\widetilde{a})\in W^{1,\infty}(\ring{M} )\oplus L^\infty (\ring{M})$ satisfies $\|(\widetilde{q},\widetilde{a})\|_{W^{1,\infty}(\ring{M} )\oplus L^\infty (\ring{M})}\le \beta$,
\begin{equation}\label{e6}
\kappa \| (u_0,u_1)\|_{\mathcal{H}} \le \| \partial _\nu \mathcal{S}_{q,a}(u_0,u_1)\|_{L^2(\Gamma \times (0,\tau ))}.
\end{equation}
Here $\kappa >0$ is the constant in \eqref{e5}.

\smallskip
Set
\[
\mathcal{D}=\{ (q,a)=(q_0,a_0)+(\widetilde{q},\widetilde{a});\; (\widetilde{q},\widetilde{a})\in W^{1,\infty}(\ring{M} )\oplus L^\infty (\ring{M})\; \mbox{and}\; \|(\widetilde{q},\widetilde{a})\|_{W^{1,\infty}(\ring{M} )\oplus L^\infty (\ring{M})}\le \beta \}.
\]
When $q_0\ge 0$, we will use $\mathcal{D}^+$ instead of $\mathcal{D}$.

\begin{remark}
%One can prove, similarly to \eqref{e5}, the following observability inequality
%\[
%\kappa \| (u_0,u_1)\|_{\mathcal{H}} \le \| \partial_\nu u\|_{L^2(\Gamma \times (0,\tau ))},
%\]
%with $\kappa$ a constant that can depend on $q$ and $a$. 
The constant $\kappa$ in \eqref{e5} is obtained in an abstract way from the HUM method and therefore it is not possible to derive how it depends on $q_0$ and $a_0$. This explains why we used a perturbation argument in order to get that  the observability constant in \eqref{e6} is uniform in $(q,a)\in \mathcal{D}$.
\end{remark}

Define the initial-to-boundary operator $\Lambda_{q,a}$ as follows
\[
\Lambda_{q,a}: (u_0,u_1)\in \mathcal{H} \mapsto \partial_\nu \mathcal{S}_{q,a}(u_0,u_1)\in L^2(\Gamma \times (0,\tau )).
\] 

Let $\mathcal{H}_1=(H_0^1(\ring{M})\cap H^2(\ring{M}))\oplus H_0^1(\ring{M})$. In light of the fact that 
\[
\partial _t\mathcal{S}_{q,a}(u_0,u_1)=\mathcal{S}_{q,a}(\Delta u_0 - q u_0 -au_1,u_1),
\]
one can easily obtain that $\Lambda _{q,a}\in \mathscr{B}(\mathcal{H}_1,H^1((0,\tau ),L^2(\Gamma ))$. Additionally, as a consequence of \eqref{e4},
\[
\|\Lambda_{q,a}\|_{\mathscr{B}(\mathcal{H}_1,H^1((0,\tau ),L^2(\Gamma )))}\le C.
\]
Here $C=C(\|q\|_\infty +\|a\|_\infty )$ is a nondecreasing function.

\smallskip
Our main purpose is the stability issue for the inverse problem of recovering $(q,a)$ from the initial-to-boundary operator $\Lambda_{q,a}$. We provide a method based on the spectral analysis of the unbounded operator defined on $H_0^1(\ring{M})\oplus L^2(\ring{M})$ by
\[ 
\mathcal{A}_{q,a}=\left( 
\begin{array}{cc}
0 & I  \\
\Delta -q  & -a  \\
 \end{array} 
 \right),
 \]
 with domain $D(\mathcal{A}_{q,a})=\mathcal{H}_1$.
 
\smallskip
When $q\ge 0$ and $a=0$, $-i\mathcal{A}_{q,0}$ is self-adjoint and $\mathcal{A}_{q,0}^{-1}$ is compact and therefore $\mathcal{A}_{q,0}$ is diagonalizable. In that case, under the assumption that the Hardy inequality holds in $M$, we prove in Theorem \ref{theorem1.1} below H\"older stability estimate with exponent $1/2$. In the general case $-i\mathcal{A}_{q,a}$ is no longer self-adjoint but it is a ``nice'' perturbation of  the self-adjoint operator $-i\mathcal{A}_{0,0}$. This observation enables us to show that  $\mathcal{A}_{q,a}$ possesses Riesz basis consisting of eigenfunctions. This result, combined with a fine analysis of the behavior of an eigenfunction near its zeroes, enables us to obtain in Theorem \ref{theorem3.1} below  H\"older stability estimate with some indefinite exponent.

\smallskip
These kind of inverse problems were initiated by the first and the second authors in \cite{AC1}. The main idea in \cite{AC1}  combine both the stability estimate for inverse source problems by \cite{ASTT} and the spectral decomposition of the unbounded operator associated with the IBVP under consideration. In the present work we improve the logarithmic stability estimates presented in \cite{AC1}. We actually prove that the stability is of H\"older type. The new ingredient we used consists, roughly speaking, in quantifying globally, the property saying that a non zero solution of an elliptic equation can not have a zero of infinite order.

\smallskip
It is worth mentioning that the geometric control condition on $\Gamma$ can be removed. But it that case the stability is of (at most) logarithmic type. We refer to the recent paper \cite{ACT} by authors  for more details.

\smallskip
A neighbor inverse problem which is much harder to tackle is the one consisting in the determination of the damping boundary coefficient from the corresponding initial-to-boundary operator. First results for this problem was obtained by the first and the second authors in \cite{AC2} and \cite{AC3}.

%%%%%%%%%%%%%%%%%%%%%%%%%%%%%%%%%%%%%%%

\section{Stability around a zero damping coefficient}

In the present section we assume that $M$  is embedded in a n-dimensional complete manifold without boundary $N=(N,g)$ and the following Hardy's inequality is fulfilled
\begin{equation}\label{1.3}
\int_M|\nabla f(x)|^2dV \ge c\int_M \frac{|f(x)|^2}{d(x,\partial M)^2}dV,\;\; f\in H_0^1(\ring{M}),
\end{equation}
for some constant $c>0$, where $dV$ is the volume form on $M$, $d$ is the geodesic distance and $d(\cdot ,\partial M)$ is the distance to $\partial M$.

\smallskip
Define $r_x(v)=\inf \{|t|;\; \gamma_{x,v}(t)\not\in \ring{M}\}$, where $\gamma_{x,v}$ is the geodesic satisfying the initial condition $\gamma_{x,v}=x$ and $\dot{\gamma}_{x,v}=v$. As it is observed in \cite{Ra}, the hardy inequality holds for $M$ whenever $M$ has the following uniform interior cone property: there are an angle $\alpha >0$ and a constant $c_0>0$ so that, for any $x\in M$, there exists an $\alpha$-angled cone $C_x\subset T_x$ with the property that $r_x(v)\le c_0d(x,\partial M)$, for all $v\in C_x$. The proof of this result follows the method by Davies \cite[page 25]{Da} for the flat case. We mention that Hardy's inequality holds for any bounded Lipschitz domain of $\mathbb{R}^n$ with constant $c\le \frac{1}{4}$, with equality when $\Omega$ is convex.

\smallskip
For $m>0$, set $\mathcal{D}_m=\{(q,a)\in \mathcal{D}^+\cap H^2(\ring{M})\oplus H^2(\ring{M});\; \|(q,a)\|_{H^2(\ring{M})\oplus H^2(\ring{M})}\le m\}$. Let $m_0$ be sufficiently large so that $\mathcal{D}_m\ne \emptyset$, for all $m\ge m_0$. In the sequel $m\ge m_0$ will be fixed.

\begin{theorem}\label{theorem1.1}
Let $(q,0)\in \mathcal{D}_m$ with $q\in C^1(M)$ and $q\ge 0$. Then there exists a constant $C>0$, that can depend on the data and $q$, so that, for any $(\widetilde{q},\widetilde{a})\in \mathcal{D}_m$, we have
\[
\| \widetilde{q}-q\|_{L^2(\ring{M})}+\|\widetilde{a}-0\|_{L^2(\ring{M})}\le C\|\Lambda_{\widetilde{q},\widetilde{a}}-\Lambda_{q,0}\|^{1/2}.
\]
Here $\|\Lambda_{\widetilde{q},\widetilde{a}}-\Lambda_{q,0}\|$ denotes the norm of $\Lambda_{\widetilde{q},\widetilde{a}}-\Lambda_{q,0}$ in $\mathscr{B}(\mathcal{H}_1,H^1((0,\tau ),L^2(\Gamma )))$.
\end{theorem}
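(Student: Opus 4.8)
The plan is to reduce the problem to an inverse source problem by linearizing around $(q,0)$, to exploit the single-frequency structure obtained by feeding an eigenfunction of $\mathcal{A}_{q,0}$ as initial datum, and finally to convert a \emph{weighted} $L^2$ control of $\widetilde{q}-q$ and $\widetilde{a}$ into an \emph{unweighted} one via Hardy's inequality \eqref{1.3}. Write $\rho=\widetilde{q}-q$, $b=\widetilde{a}$ and $\epsilon=\|\Lambda_{\widetilde{q},\widetilde{a}}-\Lambda_{q,0}\|$. For any $(u_0,u_1)$, the difference $w=\mathcal{S}_{\widetilde{q},\widetilde{a}}(u_0,u_1)-\mathcal{S}_{q,0}(u_0,u_1)$ solves the damped wave equation with coefficients $(\widetilde{q},\widetilde{a})$, homogeneous Cauchy and boundary data, and source $f=-\rho\,u-b\,\partial_t u$, where $u=\mathcal{S}_{q,0}(u_0,u_1)$; moreover $\partial_\nu w=(\Lambda_{\widetilde{q},\widetilde{a}}-\Lambda_{q,0})(u_0,u_1)$ on $\Gamma\times(0,\tau)$, so that $\|\partial_\nu w\|_{H^1((0,\tau),L^2(\Gamma))}\le \epsilon\,\|(u_0,u_1)\|_{\mathcal{H}_1}$.

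Since $q\ge 0$, the operator $-\Delta+q$ with Dirichlet condition has a simple lowest eigenvalue $\lambda_1^2>0$ with ground state $\phi_1>0$ in $\ring{M}$. I would take the complex initial datum $(u_0,u_1)=(\phi_1,i\lambda_1\phi_1)$, which is an eigenvector of $\mathcal{A}_{q,0}$ with eigenvalue $i\lambda_1$, so that the free solution is the pure mode $u=e^{i\lambda_1 t}\phi_1$ and the source collapses to the single-frequency form $f=-e^{i\lambda_1 t}\phi_1(\rho+i\lambda_1 b)$. Because $(\widetilde{q},\widetilde{a})\in\mathcal{D}_m\subset\mathcal{D}$, the uniform observability \eqref{e6} holds for $\mathcal{A}_{\widetilde{q},\widetilde{a}}$, so the inverse-source stability estimate of \cite{ASTT} applies to $w$ and gives $\|\phi_1(\rho+i\lambda_1 b)\|_{L^2(\ring{M})}\le C\,\|\partial_\nu w\|_{H^1((0,\tau),L^2(\Gamma))}\le C\epsilon$. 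Separating real and imaginary parts yields the two weighted bounds $\|\rho\,\phi_1\|_{L^2(\ring{M})}\le C\epsilon$ and $\|b\,\phi_1\|_{L^2(\ring{M})}\le C\epsilon$.

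It remains to remove the weight $\phi_1$, which vanishes only on $\partial M$; this is where the ground state is essential, since it is the only eigenfunction guaranteed to be sign-definite in $\ring{M}$, leaving no interior zeros to handle. By the Hopf lemma together with the interior positivity of $\phi_1$ one has $\phi_1\ge c\,d(\cdot,\partial M)$ on $M$, whence $\phi_1^{-2}\le C\,d(\cdot,\partial M)^{-2}$. Setting $g=\rho\,\phi_1\in H_0^1(\ring{M})$ and using that $\|g\|_{H^2(\ring{M})}$ is bounded by a constant depending only on $m$ and $q$ (as $\|\rho\|_{H^2}\le 2m$ and $\phi_1$ is smooth), the integration-by-parts interpolation $\|\nabla g\|_{L^2}^2=-\int_M g\,\Delta g\,dV\le \|g\|_{L^2}\|\Delta g\|_{L^2}\le C\|g\|_{L^2}\le C\epsilon$ controls the gradient. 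Hardy's inequality \eqref{1.3} then gives
\[
\|\rho\|_{L^2(\ring{M})}^2=\int_M\frac{|g|^2}{\phi_1^2}\,dV\le C\int_M\frac{|g|^2}{d(\cdot,\partial M)^2}\,dV\le C\|\nabla g\|_{L^2}^2\le C\epsilon,
\]
so $\|\rho\|_{L^2}\le C\epsilon^{1/2}$. The same argument applied to $b\,\phi_1$ gives $\|b\|_{L^2}\le C\epsilon^{1/2}$, and adding the two estimates proves the theorem.

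I expect the main obstacle to be the correct and uniform invocation of the inverse-source estimate of \cite{ASTT}: one must verify that it applies to the variable-coefficient damped operator $\mathcal{A}_{\widetilde{q},\widetilde{a}}$ with a constant independent of $(\widetilde{q},\widetilde{a})\in\mathcal{D}_m$ (which is precisely the purpose of the perturbed observability \eqref{e6}), and that $f$ has the regularity and the non-degeneracy of its temporal factor at $t=0$ required by that estimate. The second delicate point, where the exponent $1/2$ is actually produced, is the passage from $\|\rho\,\phi_1\|_{L^2}\lesssim\epsilon$ to $\|\rho\|_{L^2}\lesssim\epsilon^{1/2}$: this is exactly the step that forces the use of Hardy's inequality to tame the degeneracy of $\phi_1^{-1}$ at the boundary, combined with the a priori $H^2$ bound inherited from $\mathcal{D}_m$.
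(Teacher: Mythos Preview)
Your proposal is correct and follows essentially the same route as the paper: feed the first Dirichlet eigenpair $(\phi_1,i\sqrt{\lambda_1}\phi_1)$ as initial data to obtain a single-frequency source, invoke the inverse-source stability (the paper cites \cite[Corollary 2.1]{AC1}, which packages the \cite{ASTT} estimate for the damped operator uniformly over $\mathcal{D}$), and then remove the weight $\phi_1$ via Hopf's lemma plus Hardy's inequality and the a priori $H^2$ bound from $\mathcal{D}_m$. The only cosmetic difference is that the paper states the de-weighting step as an abstract interpolation inequality $\|f\|_{L^2}\le c\|f\phi_1\|_{L^2}^{1/2}\|f\|_{H^2}^{1/2}$ (Corollary \ref{corollary2}), whereas you derive the equivalent bound directly by integrating by parts $\|\nabla g\|_{L^2}^2=-\int g\,\Delta g$; these are the same argument.
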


We firstly establish a weighted interpolation inequality. To do so, we will use Hopf's maximum principle that we recall in the sequel.

\begin{lemma}
(Hopf's maximum principle) Let $q\in C(M)$ and  $u\in C^1(M)\cap C^2(\ring{M})\cap H_0^1(M)$ satisfying $q\le 0$ and $\Delta u+qu\le 0$. If $u$ is non identically equal to zero, then $u>0$ in $\ring{M}$ and $\partial _\nu u(y)=\langle \nabla u(y),\nu (y)\rangle >0$ for any $y\in \partial M$.
\end{lemma}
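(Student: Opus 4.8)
The plan is to prove the three assertions in succession: first that $u\ge 0$ on $M$, then that $u>0$ in $\ring M$, and finally that the inward normal derivative is strictly positive on $\partial M$. For the nonnegativity I would exploit the hypothesis $u\in H_0^1(M)$ by testing the differential inequality against the negative part $w=\max(-u,0)$, which again lies in $H_0^1(M)$ and vanishes on $\partial M$. Multiplying $\Delta u+qu\le 0$ by $w\ge 0$ and integrating by parts over $M$ (the boundary contribution drops because $w=0$ on $\partial M$), and using that $\nabla u=-\nabla w$ and $u=-w$ on the support of $w$, one arrives at
\[
\int_M|\nabla w|^2\,dV+\int_M(-q)\,w^2\,dV\le 0.
\]
Since $q\le 0$ both integrands are nonnegative, so $\nabla w\equiv 0$; as $w=0$ on $\partial M$ this forces $w\equiv 0$, that is $u\ge 0$.

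For the strict interior positivity I would invoke the strong maximum principle for the operator $L=\Delta+q$ with $q\le 0$. In local coordinates $L$ is a uniformly elliptic second-order operator with continuous coefficients and nonpositive zeroth-order term, and $-u$ is a subsolution, $L(-u)\ge 0$. If $u$ vanished at some interior point $x_0$, then $-u$ would attain the nonnegative maximum value $0$ at the interior point $x_0$, whence the strong maximum principle (applied on the connected set $\ring M$) would force $-u$ to be constant and therefore $u\equiv 0$, contradicting the assumption that $u$ is not identically zero. Hence $u>0$ throughout $\ring M$.

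The boundary assertion is Hopf's boundary point lemma, and this is the delicate step. Fix $y_0\in\partial M$. Since $\partial M$ is smooth, the interior sphere condition holds: there is a geodesic ball $B(z,R)\subset\ring M$ whose closure meets $\partial M$ only at $y_0$. On the annulus $A=B(z,R)\setminus\overline{B(z,\rho)}$, with $0<\rho<R$ and $r(x)=d(x,z)$ the geodesic distance (smooth away from $z$), I would use the barrier
\[
h(x)=e^{-\alpha\, r(x)^2}-e^{-\alpha R^2}.
\]
Using $|\nabla r|=1$ and the boundedness of $\Delta r$ on $A$, a direct computation gives $\Delta h=e^{-\alpha r^2}\left(4\alpha^2r^2-2\alpha r\,\Delta r-2\alpha\right)$, so that for $\alpha$ large enough, depending on $\rho$ and $\|q\|_\infty$, the dominant term $4\alpha^2 r^2$ yields $(\Delta+q)h>0$ on $A$. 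Since $u>0$ on the compact inner sphere $\{r=\rho\}$ while $h$ is bounded, and $h=0\le u$ on the outer sphere $\{r=R\}$, one may fix $\varepsilon>0$ small so that $u-\varepsilon h\ge 0$ on $\partial A$; the weak maximum principle applied to the supersolution $u-\varepsilon h$ then gives $u\ge \varepsilon h$ on $A$. As $u(y_0)=h(y_0)=0$ and $y_0$ is attained from within $A$, differentiating in the inward normal direction yields $\partial_\nu u(y_0)\ge \varepsilon\,\partial_\nu h(y_0)=2\varepsilon\alpha R\,e^{-\alpha R^2}>0$.

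The main obstacle is precisely this last step: one must control $\Delta r$ on the annulus, equivalently keep the geodesic ball $B(z,R)$ away from the cut locus of its center, and ensure the interior sphere condition is genuinely available at $y_0$. A convenient alternative, avoiding these geometric issues, is to transport the problem to a Euclidean coordinate chart near $y_0$, in which $L$ becomes a general second-order elliptic operator with smooth coefficients and nonpositive zeroth-order term, and then to apply the classical Euclidean form of Hopf's lemma directly.
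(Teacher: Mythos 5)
Your proof is correct and is essentially the paper's own approach: the paper simply cites the classical arguments of Gilbarg--Trudinger (Lemma 3.4 and Theorem 3.5) with the Euclidean tangent ball replaced by a tangent geodesic ball, which is exactly the strong-maximum-principle-plus-annulus-barrier argument you carry out in detail (your exponential barrier on the geodesic annulus, and the chart-flattening alternative you mention, are the two standard ways of implementing that substitution). The only cosmetic difference is that you establish $u\ge 0$ by an energy argument with the negative part $u^-$ rather than by the weak maximum principle, which changes nothing essential.
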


\begin{proof}
Follows the same lines to that of \cite[Lemma 3.4, page 34 and Theorem 3.5, page 35]{GT}. The tangent ball in the classical Hopf's lemma is substitute by a tangent geodesic ball (see the construction in \cite[Proof of Theorem 9.2, page 51]{PS}).
\end{proof}

\begin{proposition}\label{proposition1}
Let $q\in C(M)$ and  $u\in C^1(M)\cap C^2(\ring{M})\cap H_0^1(M)$ satisfying $q\le 0$ and $\Delta u+qu\le 0$.
If $u$ is  non identically equal to zero, then
\[
u(x)\ge c_ud(x,\partial M),\;\; x\in M,
\]
where $c_u$ is a constant that can depend on $u$ and $M$.
\end{proposition}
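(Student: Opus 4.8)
The plan is to combine Hopf's maximum principle (the preceding lemma) with a boundary collar argument and an interior compactness argument. First I would apply the lemma: since $q\le 0$, $\Delta u+qu\le 0$ and $u\not\equiv 0$, we get $u>0$ in $\ring{M}$ and $\partial_\nu u(y)>0$ for every $y\in\partial M$. Because $u\in C^1(M)$, the function $y\mapsto\partial_\nu u(y)=\langle\nabla u(y),\nu(y)\rangle$ is continuous on the compact set $\partial M$, so it attains a positive minimum; fix $\delta>0$ with $\partial_\nu u(y)\ge\delta$ for all $y\in\partial M$.

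Next I would establish a linear lower bound in a collar of the boundary. Choose $s_0>0$ small enough that the normal exponential map is a diffeomorphism onto the tubular neighborhood $N_{s_0}=\{x\in M;\ d(x,\partial M)<s_0\}$, so each $x\in N_{s_0}$ has a unique foot point $\pi(x)\in\partial M$ with $x=\exp_{\pi(x)}\!\big(d(x,\partial M)\,\nu(\pi(x))\big)$. For such $x$ set $\phi(t)=u\big(\exp_{\pi(x)}(t\,\nu(\pi(x)))\big)$ on $[0,d(x,\partial M)]$. Since $u\in H_0^1(M)\cap C^1(M)$ we have $u=0$ on $\partial M$, hence $\phi(0)=0$, while $\phi'(0)=\partial_\nu u(\pi(x))\ge\delta$ and $\phi'(t)=\langle\nabla u,\dot\gamma(t)\rangle$ depends continuously on $t$ and on the base point. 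By uniform continuity of $\nabla u$ on the compact manifold $M$, shrinking $s_0$ if necessary gives $\phi'(t)\ge\delta/2$ for all $t\in[0,s_0]$ and every foot point, whence
\[
u(x)=\phi\big(d(x,\partial M)\big)=\int_0^{d(x,\partial M)}\phi'(t)\,dt\ge\frac{\delta}{2}\,d(x,\partial M),\qquad x\in N_{s_0}.
\]

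For the interior region, the set $K=\{x\in M;\ d(x,\partial M)\ge s_0\}$ is a compact subset of $\ring{M}$, so by continuity and positivity of $u$ there is $\mu>0$ with $u\ge\mu$ on $K$. Writing $D=\max_{x\in M}d(x,\partial M)<\infty$, we get $u(x)\ge\mu\ge(\mu/D)\,d(x,\partial M)$ on $K$. Taking $c_u=\min(\delta/2,\mu/D)$ then yields the claimed inequality on all of $M$; note $c_u$ depends only on $u$ (through $\delta,\mu$) and on $M$ (through $s_0,D$), as required. The main obstacle is making the collar estimate uniform in the boundary point: one must invoke the $C^1$ regularity of $u$ up to $\partial M$ together with compactness of $\partial M$ to control $\phi'$ simultaneously along all inward normal geodesics, rather than merely pointwise.
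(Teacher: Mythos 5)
Your proof is correct, and its overall skeleton coincides with the paper's: Hopf's lemma plus compactness of $\partial M$ gives a uniform lower bound $\delta>0$ on $\partial_\nu u$; a linear lower bound is proved in a boundary collar along inward normal geodesics; the interior is handled by compactness and positivity of $u$; the two bounds are merged by taking a minimum of constants. The genuine difference is in how the collar estimate is obtained. The paper takes, for each $x$ near the boundary, a minimizing geodesic $\gamma$ from the nearest boundary point $y$ (using completeness of the ambient manifold $N$) and expands $\phi(t)=u(\gamma(t))$ to \emph{second} order, $\phi(r)=r\,\partial_\nu u(y)+\tfrac{r^2}{2}\phi''(sr)$, absorbing the remainder $cr^2$ by choosing the collar width $\epsilon\le\eta/c$. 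You instead use only a \emph{first}-order argument: the fundamental theorem of calculus together with continuity of $\nabla u$ up to the boundary, shrinking $s_0$ so that $\phi'\ge\delta/2$ along every normal geodesic. Your route is more economical in regularity: the paper's remainder bound tacitly requires the second derivatives of $u$ to be bounded up to $\partial M$, which the stated hypothesis $u\in C^1(M)\cap C^2(\ring{M})$ does not literally guarantee, whereas your argument needs only $C^1(M)$ beyond what Hopf's lemma uses. In exchange, you lean on the tubular-neighborhood machinery, and two points there should be made explicit: (i) comparing $\nabla u$ at different points of a manifold requires either local coordinates or parallel transport, so the clean statement is that $(y,t)\mapsto \frac{d}{dt}\,u\bigl(\exp_y(t\,\nu(y))\bigr)$ is jointly continuous on $\partial M\times[0,s_1]$ and equals $\partial_\nu u(y)\ge\delta$ at $t=0$, whence compactness of $\partial M$ yields your $s_0$; and (ii) your identity $u(x)=\phi\bigl(d(x,\partial M)\bigr)$ rests on the standard fact that, in a sufficiently small collar, the normal geodesic from the foot point realizes the distance to $\partial M$, i.e. $d\bigl(\exp_y(t\,\nu(y)),\partial M\bigr)=t$; both facts are standard for compact $\partial M$, but they are where the work of your version sits.
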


\begin{proof}
Let $0<\epsilon $ to be specified later. Let $x\in M$ so that $d(x,\partial M)\le \epsilon$ and $y\in \partial M$ satisfying $d(x,\partial M )=d(x,y)$. Since $N$ is complete, there exist a unit speed minimizing geodesic $\gamma : [0,r]\rightarrow M$ such that $\gamma (0)=y$, $\gamma (r)=x$ and $\dot{\gamma}(0)=\nu (y)$, where we set $r=d(x,\partial M)$ (see for instance \cite[page 150]{Pe}).

\smallskip
Define $\phi(t)=u(\gamma (t))$. Then
\begin{align*}
&\phi '(t)=du(\gamma (t))(\dot{\gamma}(t))
\\
&\phi''(t)=d^2u(\gamma (t))(\dot{\gamma}(t),\dot{\gamma}(t))+du(\gamma (t))(\ddot{\gamma}(t)).
\end{align*}
Here $\dot{\gamma}(t)=\dot{\gamma}^i(t)\partial _i\in T_{\gamma (t)}$. Observe that by the geodesic equation
\[
\ddot{\gamma}^k(t)=-\dot{\gamma}^i(t)\dot{\gamma}^j(t)\Gamma_{ij}^k(\gamma (t)),
\]
where $\Gamma_{ij}^k$ are the Christoffel symbols associated to the metric $g$.

\smallskip
Taking into account that $\phi '(0)=du(y)(\nu (y))=\langle \nabla u(y),\nu (y)\rangle =\partial _\nu u(y)$, we get
\[
\phi (r)=r\partial _\nu u(y)+\frac{r^2}{2}\phi ''(st),
\]
for some $0<s<1$. Hence, there exist $c>0$ depending on $u$ and $M$ so that
\[
\phi(r)\ge 2r\eta -cr^2\ge r\eta + r(\eta -c\epsilon)
\]
with $2\eta = \min_{y\in\Gamma} \partial _\nu u(y) >0$ (by the compactness of $\Gamma$). Consequently,
\[
\phi (r)\ge r\eta 
\]
provided that $\epsilon \le \eta /c$. In other words, we proved
\begin{equation}\label{4}
u(x)=\phi (r )\ge r \eta =\eta d(x,\partial M ).
\end{equation}

On the other hand, an elementary compactness argument yields, where  $M^\epsilon =\{x\in M;\; d(x,\Gamma )\ge \epsilon\}$, 
\begin{equation}\label{6}
u(x)\ge \min_{z\in M^\epsilon}u(z)\ge \frac{\min_{z\in M^\epsilon}u(z)}{\max_{z\in M^\epsilon}d(z,\partial M )}d(x,\partial M),\;\; x\in M^\epsilon .
\end{equation}
In light of \eqref{4} and \eqref{6}, we end up getting
\[
u(x)\ge c_u d(x,\partial M ),\;\; x\in M.
\]
\end{proof}

As a consequence of Proposition \ref{proposition1} and Hardy's inequality, we have
\begin{corollary}\label{corollary1}
(Weighted interpolation inequality) Let $q\in C(M)$, $q\le 0$, and $u\in C^2(M) \cap H_0^1(M)$ non identically equal to zero satisfying $\Delta u+qu\le 0$. There exists a constant $c_u$, that can depend only on $u$ and $M$ so that, for any $f\in H^2(\ring{M})$,
\[
\|f\|_{L^2(\ring{M})}\le c_u\|fu\|_{L^2(\ring{M})}^{1/2}\|f\|_{H^2(\ring{M})}^{1/2}.
\]
\end{corollary}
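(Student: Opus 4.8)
The plan is to prove the inequality for an arbitrary $f\in H^2(\ring{M})$ by localizing near $\partial M$, writing $d(x)=d(x,\partial M)$. A natural first move is Cauchy--Schwarz,
\[
\|f\|_{L^2}^2=\int_M|fu|\,|f|\,u^{-1}\,dV\le \|fu\|_{L^2}\,\|f/u\|_{L^2},
\]
which already exhibits the exponents $1/2,1/2$. When $f\in H_0^1(\ring{M})$ this closes immediately: by Proposition \ref{proposition1} we have $u\ge c_u d$, so $\|f/u\|_{L^2}\le c_u^{-1}\|f/d\|_{L^2}$, and Hardy's inequality \eqref{1.3} bounds the right-hand side by $C\|\nabla f\|_{L^2}\le C\|f\|_{H^2}$. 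For a general $f\in H^2(\ring{M})$, however, $\|f/u\|_{L^2}$ may be infinite (if $f$ does not vanish on $\partial M$), so I would instead estimate $\|f\|_{L^2}^2$ directly. By Proposition \ref{proposition1} it suffices to prove the distance-weighted inequality $\|f\|_{L^2(M)}^2\le C\|df\|_{L^2(M)}\|f\|_{H^1(M)}$, since $u\ge c_u d$ gives $\|df\|_{L^2}\le c_u^{-1}\|uf\|_{L^2}$ and $\|f\|_{H^1}\le\|f\|_{H^2}$.

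First I would split $M=\{d\ge\delta_0\}\cup\{d<\delta_0\}$, choosing $\delta_0>0$ small enough that the normal exponential map identifies the collar $\{d<\delta_0\}$ with $\partial M\times[0,\delta_0)$ in Fermi coordinates $(y,t)$, $t=d(x)$, the metric taking the form $dt^2+g_t(y)$ and $dV=J(t,y)\,dt\,dy$ with $0<J_0\le J\le J_1$. On the interior piece $u\ge c_u\delta_0$, hence using $\|fu\|_{L^2}\le\|u\|_\infty\|f\|_{L^2}$,
\[
\int_{\{d\ge\delta_0\}}|f|^2\,dV\le(c_u\delta_0)^{-2}\|fu\|_{L^2}^2\le C\|fu\|_{L^2}\|f\|_{H^2}.
\]
The entire content is therefore the collar.

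For the collar I would establish the one-dimensional weighted interpolation inequality: for $h\in H^1(0,\delta_0)$,
\[
\int_0^{\delta_0}|h|^2\,dt\le C\Big(\int_0^{\delta_0}t^2|h|^2\,dt\Big)^{1/2}\Big(\int_0^{\delta_0}(|h|^2+|h'|^2)\,dt\Big)^{1/2}.
\]
Its proof is where the sharp exponent is won. Fix $\tau\in(0,\delta_0/2)$, set $A=\int_0^{\delta_0}t^2|h|^2$ and $D=\int_0^{\delta_0}|h'|^2$. For $t\in(0,\tau)$ and $\sigma\in(\tau,2\tau)$ the fundamental theorem of calculus gives $|h(t)|^2\le 2|h(\sigma)|^2+C\tau\int_0^{2\tau}|h'|^2$; averaging in $\sigma$ and using $\int_\tau^{2\tau}|h|^2\le\tau^{-2}A$ yields $\int_0^\tau|h|^2\le C\tau^{-2}A+C\tau^2 D$, while on $(\tau,\delta_0)$ the weight gives $\int_\tau^{\delta_0}|h|^2\le\tau^{-2}A$. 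Optimizing $C\tau^{-2}A+C\tau^2D$ over $\tau$ produces $C\sqrt{AD}$, which is the claim. The decisive point is the \emph{quadratic} factor $\tau^2$ in the boundary layer: the cruder bound $\int_0^\tau|h|^2\le C\tau\|h\|_\infty^2$ would give only the exponent $1/3$.

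Finally I would apply this lemma with $h=f(\cdot,y)$ for a.e.\ $y$, integrate in $y$, and use Cauchy--Schwarz in $y$:
\[
\int_{\partial M}\!\int_0^{\delta_0}|f|^2\,dt\,dy\le C\Big(\int_{\partial M}\!\int_0^{\delta_0}t^2|f|^2\Big)^{1/2}\Big(\int_{\partial M}\!\int_0^{\delta_0}(|f|^2+|\partial_t f|^2)\Big)^{1/2}.
\]
Passing back to $dV$ (bounded Jacobian), the first factor is $\le C\|df\|_{L^2}\le Cc_u^{-1}\|uf\|_{L^2}$ by Proposition \ref{proposition1}, and the second is $\le C\|f\|_{H^1}\le C\|f\|_{H^2}$ since $|\partial_t f|\le|\nabla f|$. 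Combining with the interior estimate gives $\|f\|_{L^2}^2\le C\|fu\|_{L^2}\|f\|_{H^2}$, as desired. I expect the main obstacle to be exactly this collar estimate --- securing the quadratic power of the localization scale $\tau$ in the boundary layer (equivalently, controlling a slice value $|h(\sigma)|^2$ by the weighted integral $\int t^2|h|^2$), since this is what upgrades the easy $1/3$-exponent to the sharp $1/2$; the Fermi-coordinate and Jacobian bookkeeping is routine by comparison.
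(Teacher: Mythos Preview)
Your argument is correct, but it takes a markedly different route from the paper's. You rightly observe that the naive Cauchy--Schwarz $\|f\|_{L^2}^2\le\|fu\|_{L^2}\|f/u\|_{L^2}$ fails for general $f\in H^2$ because $f/d$ need not be in $L^2$. The paper sidesteps this with a one-line trick you did not try: apply Hardy's inequality \eqref{1.3} to the product $fu$, which \emph{does} lie in $H_0^1(\ring{M})$ (since $u\in C^2(M)$ vanishes on $\partial M$). From $u\ge c_u d$ one gets $\|f\|_{L^2}^2\le C\int (fu)^2/d^2\le C\|\nabla(fu)\|_{L^2}^2$, hence $\|f\|_{L^2}\le C\|fu\|_{H^1}$; then the standard interpolation $\|fu\|_{H^1}\le C\|fu\|_{L^2}^{1/2}\|fu\|_{H^2}^{1/2}$ together with $\|fu\|_{H^2}\le C_u\|f\|_{H^2}$ (using $u\in C^2$) finishes. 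Your collar analysis with the one-dimensional weighted lemma is longer but has two advantages worth noting: it never invokes the standing Hardy assumption \eqref{1.3} (your $t^2$-weighted inequality is proved from scratch), and it actually yields the stronger bound $\|f\|_{L^2}\le C\|fu\|_{L^2}^{1/2}\|f\|_{H^1}^{1/2}$, needing only $f\in H^1$; the paper's interpolation step genuinely consumes the $H^2$ regularity. So the paper's proof is shorter by exploiting the hypothesis more fully, while yours is more self-contained and slightly sharper in the regularity scale.
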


\begin{proof}
By Proposition \ref{proposition1}, $u(x)\ge c_ud(x,\partial M )$. Therefore
\[
\int_{\ring{M}} f(x)^2dV(x)\le c_u^{-1} \int_{\ring{M}}\frac{f(x)^2u(x)^2}{d(x,\partial M)^2}dV(x).
\]
Combined with Hardy's inequality \eqref{1.3}, this estimate gives
\begin{equation}\label{10}
\int_{\ring{M}} f(x)^2dV(x)\le c_u^{-1}c \int_{\ring{M}}|\nabla (fu)(x)|^2dV(x).
\end{equation}
But, from usual interpolation inequalities,
\[
\|fu\|_{H^1(\ring{M})}\le C\| fu\|_{L^2(\ring{M})}^{1/2}\|fu\|_{H^2(\ring{M})}^{1/2},
\]
where the constant $C$ depends only on $M$.

Consequently, \eqref{10} implies
\[
\|f\|_{L^2(\ring{M})}\le c_u\|fu\|_{L^2(\ring{M})}^{1/2}\|f\|_{H^2(\ring{M})}^{1/2},
\]
which is the expected inequality
\end{proof}

Fix $0\le q\in C^1 (M)$ and consider the unbounded operator $A=-\Delta+q$, with domain $D(A)=H^2(\ring{M}) \cap H_0^1(\ring{M})$. An extension of \cite[Theorem 8.38, page 214]{GT} to a compact Riemannian manifold with boundary shows that the first eigenvalue of $A$, denoted by $\lambda _1$ is simple and has a positive eigenfunction. Let then $\phi _1\in C^2 (M)$ (by elliptic regularity) be the unique first eigenfunction satisfying $\phi_1 >0$ and normalized by $\|\phi _1\|_{L^2(\ring{M})}=1$. Since $\Delta \phi _1-qu =-\lambda _1\phi_1$, the Hopf's maximum principle is applicable for $\phi_1$. Therefore, a particular weight in the preceding corollary is obtained by taking $u=\phi _1$.

\begin{corollary}\label{corollary2}
There exists a constant $c>0$, that can depend on $\phi_1$, so that, for any $f\in H^2(\ring{M} )$,
\[
\|f\|_{L^2(\ring{M})}\le c\|f\phi _1\|_{L^2(\ring{M})}^{1/2}\|f\|_{H^2(\ring{M})}^{1/2}.
\]
\end{corollary}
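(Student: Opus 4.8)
The plan is to read Corollary~\ref{corollary2} simply as the special instance of Corollary~\ref{corollary1} obtained by taking the weight $u=\phi_1$. Consequently the whole task reduces to one thing: exhibiting a \emph{continuous, nonpositive} potential for which $\phi_1$ satisfies the differential inequality demanded in Corollary~\ref{corollary1}. Once such a potential is produced, the desired estimate is \emph{verbatim} the conclusion of Corollary~\ref{corollary1} specialized to $u=\phi_1$, with the constant $c$ being the constant $c_u$ of that corollary evaluated at $u=\phi_1$.

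First I would record the admissibility of $\phi_1$ as a weight: by elliptic regularity $\phi_1\in C^2(M)$, as a Dirichlet eigenfunction it belongs to $H_0^1(\ring{M})$, it is not identically zero, and by construction $\phi_1>0$ in $\ring{M}$. These facts dispatch the structural hypotheses $u\in C^2(M)\cap H_0^1(M)$, $u\not\equiv 0$ of Corollary~\ref{corollary1}. The only genuine point is the choice of potential. The eigenvalue equation $A\phi_1=\lambda_1\phi_1$ reads $-\Delta\phi_1+q\phi_1=\lambda_1\phi_1$, i.e.
\[
\Delta\phi_1-q\phi_1=-\lambda_1\phi_1 .
\]
The naive potential $\lambda_1-q$ associated with the identity $\Delta\phi_1+(\lambda_1-q)\phi_1=0$ need not be nonpositive, so I would \emph{not} use it. Instead I would feed the potential $-q$ into Corollary~\ref{corollary1}: since $q\ge 0$ one has $-q\le 0$, and $-q\in C(M)$ because $q\in C^1(M)$. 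Using $\phi_1>0$ together with $\lambda_1>0$ (which holds because the quadratic form $\int_M(|\nabla u|^2+qu^2)\,dV$ of $A$ is positive on $H_0^1(\ring{M})$ when $q\ge 0$), the displayed identity gives
\[
\Delta\phi_1+(-q)\phi_1=\Delta\phi_1-q\phi_1=-\lambda_1\phi_1\le 0 .
\]
Hence the pair $(-q,\phi_1)$ satisfies every hypothesis of Corollary~\ref{corollary1}.

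Applying Corollary~\ref{corollary1} with this choice then yields, for every $f\in H^2(\ring{M})$,
\[
\|f\|_{L^2(\ring{M})}\le c_{\phi_1}\|f\phi_1\|_{L^2(\ring{M})}^{1/2}\|f\|_{H^2(\ring{M})}^{1/2},
\]
which is precisely the claimed inequality with $c=c_{\phi_1}$; the dependence of $c$ on $\phi_1$ is inherited from the constant $c_u$ furnished by Proposition~\ref{proposition1}. I do not expect any serious obstacle: the sole delicate point is the remark that the potential to be inserted into Corollary~\ref{corollary1} must be $-q$ rather than $\lambda_1-q$, so that the sign condition on the potential and the differential inequality $\Delta\phi_1+(-q)\phi_1\le 0$ hold simultaneously.
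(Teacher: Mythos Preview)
Your proposal is correct and follows exactly the paper's approach: the paragraph preceding Corollary~\ref{corollary2} already records that $\phi_1\in C^2(M)$, $\phi_1>0$, and $\Delta\phi_1-q\phi_1=-\lambda_1\phi_1\le 0$, so that (with potential $-q\le 0$) Corollary~\ref{corollary1} applies to $u=\phi_1$. Your only addition is to make explicit that the potential fed into Corollary~\ref{corollary1} is $-q$ rather than $\lambda_1-q$, which is precisely the point the paper leaves implicit.
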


\begin{proof}[Completion of the proof of Theorem \ref{theorem1.1}]

Let $(q,0), (\widetilde{q},\widetilde{a}) \in \mathcal{D}_m$ with $q\in C^1(M)$ and $q\ge 0$. Denote by $0\le \phi_1$  the first eigenfunction of $-\Delta +q$ under Dirichlet boundary condition, normalized by $\|\phi_1\|_{L^2(\ring{M})}=1$.  Set $u=\mathcal{S}_{q,0}(\phi_1,i\sqrt{\lambda _1}\phi_1)=e^{i\sqrt{\lambda _1}t}\phi_1$ and $\widetilde{u}=\mathcal{S}_{\widetilde{q},\widetilde{a}}(\phi_1,i\sqrt{\lambda _1}\phi_1)$. Then 
\[
v=\mathcal{S}_{\widetilde{q},\widetilde{a}}(\phi_1,i\sqrt{\lambda _1}\phi_1)-\mathcal{S}_{q,0}(\phi_1,i\sqrt{\lambda _1}\phi_1)
\] 
is the solution of the following IBVP
\begin{equation}\label{3.1b}
\left\{
\begin{array}{lll}
 \partial _t^2 v - \Delta v + \widetilde{q}v + \widetilde{a}(x) \partial_t v = -[(\widetilde{q}-q)+i\sqrt{\lambda _1}\widetilde{a}]e^{i\sqrt{\lambda _1}t}\phi_1 \;\; &\mbox{in}\;   Q=\ring{M} \times (0,\tau), 
 \\
v = 0 &\mbox{on}\;  B =\partial M \times (0,\tau), 
\\
v(\cdot ,0) = 0,\; \partial_t v (\cdot ,0) = 0.
\end{array}
\right.
\end{equation}
From \cite[Corollary 2.1]{AC1}
\begin{align*}
\| \phi _1(\widetilde{q}-q)\|_{L^2(\ring{M})}+\|\phi_1\widetilde{a}\|_{L^2(\ring{M})}&\le c_\beta \|\partial _\nu v\|_{H^1((0,\tau ),L^2(\Gamma ))}
\\
&\le c_\beta\|\Lambda_{\widetilde{q},\widetilde{a}}-\Lambda_{q,0}\| .
\end{align*}
This inequality, combined with Corollary \ref{corollary2}, yields
\[
\|\widetilde{q}-q\|_{L^2(\ring{M})}+\|\widetilde{a}-0\|_{L^2(\ring{M})}\le C\|\Lambda_{\widetilde{q},\widetilde{a}}-\Lambda_{q,0}\|^{1/2}.
\]
The proof is then complete.
\end{proof}

%%%%%%%%%%%%%%%%%%%%%%%%%%%%%%%%%%%%%%%
\section{Stability for the general case}

We aim to establish the following theorem.

\begin{theorem}\label{theorem3.1}
Let $(q,a)\in \mathcal{D}$. There exist $C>0$ and $0<\alpha <1$, that can depend on $(q,a)$, so that for any $(\widetilde{q},\widetilde{a})\in \mathcal{D}$, we have
\[
\|\widetilde{q}-q\|_{L^2(\ring{M})}+\|\widetilde{a}-a\|_{L^2(\ring{M})}\le C\|\Lambda_{\widetilde{q},\widetilde{a}}-\Lambda_{q,a}\|^\alpha .
\]
Here $\|\Lambda_{\widetilde{q},\widetilde{a}}-\Lambda_{q,a}\|$ denotes the norm of $\Lambda_{\widetilde{q},\widetilde{a}}-\Lambda_{q,a}$ in $\mathscr{B}(\mathcal{H}_1,H^1((0,\tau ),L^2(\Gamma )))$.
\end{theorem}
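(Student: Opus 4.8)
\textbf{Proof proposal for Theorem \ref{theorem3.1}.}

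The plan is to mimic the structure of the proof of Theorem \ref{theorem1.1}, but to replace the single positive first eigenfunction $\phi_1$ by the family of eigenfunctions of the non-self-adjoint operator $\mathcal{A}_{q,a}$. First I would exploit the observation recorded in the introduction: since $-i\mathcal{A}_{q,a}$ is a relatively compact perturbation of the self-adjoint operator $-i\mathcal{A}_{0,0}$, the operator $\mathcal{A}_{q,a}$ admits a Riesz basis $(\Phi_k)_k$ of eigenfunctions with eigenvalues $(\mu_k)_k$. Writing $\Phi_k=(\varphi_k, \mu_k \varphi_k)$, each $\varphi_k$ solves the (complex, damped) eigenvalue equation $\Delta \varphi_k - q\varphi_k = (\mu_k^2 + a\mu_k)\varphi_k$ with Dirichlet boundary condition. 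For each fixed $k$, I would set $u_k = \mathcal{S}_{q,a}(\varphi_k, \mu_k\varphi_k) = e^{\mu_k t}\varphi_k$ and run the same difference computation as in \eqref{3.1b}: the function $v_k = \mathcal{S}_{\widetilde{q},\widetilde{a}}(\varphi_k,\mu_k\varphi_k) - \mathcal{S}_{q,a}(\varphi_k,\mu_k\varphi_k)$ solves a wave equation with source $-[(\widetilde{q}-q)+\mu_k(\widetilde{a}-a)]e^{\mu_k t}\varphi_k$ and zero Cauchy data. Invoking \cite[Corollary 2.1]{AC1} (the quantitative observability/source-estimate) then bounds a weighted norm of $(\widetilde{q}-q)+\mu_k(\widetilde{a}-a)$, against $\varphi_k$, by $\|\Lambda_{\widetilde{q},\widetilde{a}}-\Lambda_{q,a}\|$.

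The key new analytic input, flagged in the introduction as ``quantifying globally the property that a non-zero solution of an elliptic equation cannot have a zero of infinite order,'' is the substitute for Corollary \ref{corollary2}. Here $\varphi_k$ is complex-valued and generically has interior zeroes, so I cannot use a single Hopf/Hardy lower bound $\varphi_k \gtrsim d(\cdot,\partial M)$. Instead I would establish a weighted interpolation inequality of the form
\[
\|f\|_{L^2(\ring{M})} \le c_k \, \|f\varphi_k\|_{L^2(\ring{M})}^{\theta}\, \|f\|_{H^2(\ring{M})}^{1-\theta},
\]
for some $\theta\in(0,1)$ and some fixed index $k$, by controlling how small $|\varphi_k|$ can be on a neighbourhood of its nodal set. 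The idea is that since $\varphi_k$ is a nonzero eigenfunction, it vanishes only to finite order, uniformly bounded on the compact $M$; a doubling/vanishing-order estimate (of the type available for solutions of second-order elliptic equations, e.g. via Carleman estimates or Almgren's frequency function) yields a lower bound for $|\varphi_k|$ away from its zero set together with a controlled measure of the sublevel sets $\{|\varphi_k|<\delta\}$. Splitting the integral of $f^2$ over $\{|\varphi_k|\ge\delta\}$ and $\{|\varphi_k|<\delta\}$, bounding the first piece by $\delta^{-2}\|f\varphi_k\|_{L^2}^2$ and the second using the Sobolev embedding $H^2\hookrightarrow L^\infty$ (or $C^0$) times the small measure of the sublevel set, and then optimizing in $\delta$, produces the interpolation inequality with an exponent $\theta=\alpha$ governed by the vanishing order of $\varphi_k$.

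Once this weighted interpolation inequality is in hand, I would combine it with the source estimate to obtain, for a single well-chosen $k$ (one for which $\mu_k$ is not real, so that taking real and imaginary parts of $(\widetilde{q}-q)+\mu_k(\widetilde{a}-a)$ separates the two unknowns $\widetilde{q}-q$ and $\widetilde{a}-a$),
\[
\|\widetilde{q}-q\|_{L^2(\ring{M})} + \|\widetilde{a}-a\|_{L^2(\ring{M})} \le C\,\|\Lambda_{\widetilde{q},\widetilde{a}}-\Lambda_{q,a}\|^{\alpha},
\]
using the uniform boundedness over $\mathcal{D}$ of the data norms to convert the $\|f\|_{H^2}^{1-\theta}$ factor into a constant. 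The separation of $\widetilde q-q$ from $\widetilde a-a$ requires $\operatorname{Im}\mu_k\neq 0$; this is exactly where the damping term matters, and I would guarantee such a $k$ exists from the spectral structure of $\mathcal{A}_{q,a}$. The main obstacle, and the crux of the whole argument, is the weighted interpolation inequality for the oscillating complex eigenfunction: unlike the positive ground state used in Theorem \ref{theorem1.1}, here the lower bound on $|\varphi_k|$ must be replaced by a quantitative control of its nodal set, and the final exponent $\alpha$ is ``indefinite'' precisely because it is dictated by the (a priori uncontrolled) vanishing order of $\varphi_k$.
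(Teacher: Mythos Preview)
Your proposal follows essentially the same architecture as the paper: pick an eigenfunction $\Phi_k=(\varphi,\mu\varphi)$ of $\mathcal{A}_{q,a}$, form the difference $v$, invoke \cite[Corollary 2.1]{AC1} to bound $\|\varphi(\widetilde q-q)\|_{L^2}+\|\varphi(\widetilde a-a)\|_{L^2}$ by the operator norm, and then unwedge the weight $\varphi$ via an interpolation inequality. Two points deserve correction or sharpening.

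\smallskip
\textbf{(i) The interpolation must use $L^\infty$, not $H^2$.} In Theorem~\ref{theorem3.1} the coefficients lie only in $\mathcal{D}$, so $\widetilde a-a$ is merely in $L^\infty(\ring{M})$ and has no $H^2$ bound (that is available only in $\mathcal{D}_m$, used for Theorem~\ref{theorem1.1}). The paper's Lemma~3.1 is therefore stated with $\|f\|_{L^\infty}$ on the right, and the proof is not your sublevel-set splitting but a direct H\"older argument: once one knows that $\varphi^{-\delta}\in L^1(\ring{M})$ for some $\delta>0$, write $|f|^\alpha=|f\varphi|^\alpha|\varphi|^{-\alpha}$ with $\alpha=\tfrac{2\delta}{2+\delta}$ and apply H\"older to get $\|f\|_{L^2}\le C\|f\|_{L^\infty}^{2/(2+\delta)}\|f\varphi\|_{L^2}^{\delta/(2+\delta)}$. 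This yields $\alpha=\tfrac{\delta}{2+\delta}$ and avoids Sobolev embeddings entirely.

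\smallskip
\textbf{(ii) The quantitative unique-continuation step needs boundary estimates.} Your sketch invokes interior doubling/frequency estimates, but $\varphi$ vanishes identically on $\partial M$ (Dirichlet), so the nodal set contains the whole boundary and interior arguments alone do not control the sublevel sets near $\partial M$. The paper handles this by proving Proposition~3.1 ($\varphi^{-\delta}\in L^1$) in two steps: after flattening the boundary, it uses the \emph{boundary} doubling inequality of Adolfsson--Escauriaza \cite{AE} together with a boundary Harnack inequality for the subsolution $|u|^2$ \cite[Theorem 9.26]{GT}, reflects evenly across $\{x_n=0\}$, and then feeds the resulting interior doubling and $L^\infty$--$L^2$ estimates into the argument of \cite[Theorem 4.2]{CT} to conclude $|\varphi|^{-\delta}\in L^1$ on a boundary collar; the interior part is standard.

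\smallskip
Finally, your worry about separating $\widetilde q-q$ from $\widetilde a-a$ via $\operatorname{Im}\mu_k\neq 0$ is not singled out in the paper: the cited \cite[Corollary 2.1]{AC1} is already a two-coefficient source estimate, so the separation comes packaged with that result rather than requiring an ad hoc choice of $k$.
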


Prior to proving this theorem, we make the spectral analysis of the operator $\mathcal{A}_{q,a}$. Denote the sequence of eigenvalues, counted according to their multiplicity, of $A=-\Delta$, with domain $D(A)=H^2(\ring{M} )\cap H_0^1(\ring{M} )$, by $0<\lambda _1<\lambda _2\leq \ldots \lambda _k \leq \ldots$.
 
\smallskip
Consider the unbounded operators defined, on $\mathcal{H}=H_0^1(\ring{M} )\oplus L^2(\ring{M})$, by
\[ 
\mathcal{A}_0=\left( 
\begin{array}{cc}
0 & I  \\
-A  & 0  \\
 \end{array} 
 \right),\;\; D(\mathcal{A}_0)=\mathcal{H}_1=\left[H^2(\ring{M} )\cap H_0^1(\ring{M} )\right]\oplus H_0^1(\ring{M})
 \]
 and $\mathcal{A}_{q,a}=\mathcal{A}_0+\mathcal{B}_{q,a}$ with $D(\mathcal{A}_{q,a})=D(\mathcal{A}_0)$, where
 \[ 
\mathcal{B}_a=\left( 
\begin{array}{cc}
0 & 0 \\
-q & -a  \\
 \end{array} 
 \right) \in \mathscr{B}(\mathcal{H}).
 \]
 
According to \cite[Proposition 3.7.6, page 100]{TW}, we know that $\mathcal{A}_0$ is skew-adjoint operator with $0\in \rho (\mathcal{A}_0 )$ and
\[ 
\mathcal{A}_0^{-1}=\left( 
\begin{array}{cc}
0 & -A^{-1}  \\
I  & 0  \\
 \end{array} 
 \right).
 \]
We note that, since $\mathcal{A}_0^{-1}:\mathcal{H} \rightarrow \mathcal{H}_1$ is bounded and the embedding $\mathcal{H}_1\hookrightarrow \mathcal{H}$ is compact, $\mathcal{A}_0^{-1}:\mathcal{H} \rightarrow \mathcal{H}$ is compact.

\smallskip
Also, from \cite[Proposition 3.7.6, page 100]{TW}, $\mathcal{A}_0$ is diagonalizable and its spectrum consists in the sequence $(i\sqrt{\lambda _k})$.

\smallskip
Consider the bounded operator $\mathcal{C}_{q,a}=(i\mathcal{A}_0^{-1})(-i\mathcal{B}_{q,a})(i\mathcal{A}_0^{-1})$. Let $s_k(\mathcal{C}_{q,a})$ be the singular values of $\mathcal{C}_{q,a}$, that is the eigenvalues of $(\mathcal{C}_{q,a}^\ast \mathcal{C}_{q,a})^{1/2}$. In light of \cite[formulas (2.2) and (2.3), page 27]{GK}, we have
\[
s_k(\mathcal{C}_{q,a})\le \|\mathcal{B}_{q,a}\|s_k(i\mathcal{A}_0^{-1})^2=\|\mathcal{B}_{q,a}\|\lambda _k^{-1} ,
\]
where $\|\mathcal{B}_{q,a}\|$ denote the norm of $\mathcal{B}_{q,a}$ in $\mathscr{B}(\mathcal{H})$.

\smallskip
On the other hand, referring to Weyl's asymptotic formula, we have $\lambda _k =O(k^{-2/n})$. Consequently, $\mathcal{C}_{q,a}$ belongs to the Shatten class $\mathcal{S}_p$ for any $p>n/2$, that is
\[
\sum_{k\ge 1}\left[s_k(\mathcal{C}_{q,a})\right]^p<\infty .
\]
We apply \cite[Theorem 10.1, page 276]{GK} in order to get that the spectrum of $\mathcal{A}_{q,a}$ consists in a sequence of eigenvalues $(\mu_{q,a,k})$, counted according to their multiplicity, and the corresponding eigenfunctions $(\phi_{q,a,k})$ form a Riesz basis of $\mathcal{H}$.

\smallskip
Fix $(q,a,k)$ and set $\mu  =\mu _{q,a,k}$ and $\phi= \phi_{q,a,k} =(\varphi  ,\psi )\in \mathcal{H}_1$ be an eigenfunction associated to $\mu$. Then it is straightforward to check that
$\psi =\mu \varphi$ and $(-\Delta +q+a\mu +\mu ^2)\varphi=0$ in $\ring{M}$. Since $-\Delta \varphi =f$ in $\ring{M}$ with $f= (q+a\mu +\mu ^2)\varphi$, we can use iteratively \cite[Corollary 7.11, page 158]{GT} (Sobelev embedding theorem) together with \cite[Theorem 9.15, page 241]{GT} in order to obtain that $\varphi \in W^{2,p}(\ring{M})$ for any $1<p<\infty$. In particular $\varphi ,|\varphi |^2\in W^{2,n}(\ring{M})\cap C^0(M)$. This property of $\varphi$ will be used in the proof of Proposition \ref{proposition3.1} below.

\smallskip
The following result enters in an essential way in the proof of the weighted interpolation inequality that we will use to prove Theorem \ref{theorem3.1}.
\begin{proposition}\label{proposition3.1}
There exists $\delta >0$ so that $\varphi ^{-\delta} \in L^1(\ring{M})$. 
\end{proposition}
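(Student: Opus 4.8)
The plan is to show that the eigenfunction $\varphi$ cannot vanish to infinite order at any point, and to quantify this uniformly so as to control the measure of the sublevel sets $\{|\varphi| < \epsilon\}$. Recall that $\varphi$ solves the elliptic equation $-\Delta\varphi + (q + a\mu + \mu^2)\varphi = 0$ in $\ring{M}$ with $\varphi \in W^{2,n}(\ring{M}) \cap C^0(M)$, and $\varphi$ is not identically zero. The integrability $\varphi^{-\delta} \in L^1(\ring{M})$ is equivalent, via the layer-cake formula
\[
\int_{\ring{M}} |\varphi|^{-\delta}\, dV = \delta \int_0^\infty s^{-\delta - 1}\, \bigl|\{x : |\varphi(x)| < s\}\bigr|\, ds + (\text{bounded part}),
\]
to a bound of the form $|\{x : |\varphi(x)| < s\}| \le C s^{\theta}$ for some $\theta > 0$, since then choosing $\delta < \theta$ makes the integral near $s = 0$ converge. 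So the real goal is a quantitative sublevel-set estimate for solutions of the elliptic equation.

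The key tool I would invoke is the doubling inequality / strong unique continuation property for solutions of $-\Delta\varphi + V\varphi = 0$ with $V \in L^\infty$. The standard result (going back to Garofalo--Lin, and quantified through Almgren's frequency function) gives that the vanishing order of $\varphi$ at any point is bounded by a constant $N$ depending only on $M$ and $\|V\|_{L^\infty}$, and more precisely yields a doubling estimate
\[
\fint_{B(x,2r)} |\varphi|^2 \le C\, \fint_{B(x,r)} |\varphi|^2,
\]
uniformly in $x$ and for $r$ up to a fixed scale. Doubling inequalities of this type are known to imply sublevel-set measure bounds: a result in the spirit of the work connecting doubling to the Łojasiewicz-type inequality (see e.g. Donnelly--Fefferman, Logunov--Malinnikova) gives exactly $|\{|\varphi| < s \,\sup|\varphi|\}| \le C s^{\alpha}$ with $\alpha$ controlled by the doubling constant. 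I would state this as the central lemma, citing the appropriate unique-continuation reference, and then deduce Proposition \ref{proposition3.1} by the layer-cake computation above with any $\delta < \alpha$.

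Concretely, the steps in order are: first, reduce the claim to the sublevel-set estimate $|\{|\varphi| < s\}| \le C s^\theta$ via layer-cake, noting that on the region where $|\varphi|$ is bounded below there is nothing to prove; second, establish a uniform doubling inequality for $\varphi$ using the quantitative strong unique continuation for $-\Delta + V$ on the compact manifold, with constants depending only on $M$ and the uniform bound on the potential $q + a\mu + \mu^2$ available because $(q,a) \in \mathcal{D}$ and $\mu$ lies in a bounded set; third, pass from doubling to the measure estimate on sublevel sets via a Vitali covering argument that compares the local sup of $|\varphi|$ to its average; fourth, conclude with the layer-cake integral.

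The main obstacle I anticipate is the third step — converting the doubling property into the polynomial sublevel-set bound with a clean, self-contained argument — together with controlling the behavior of $\varphi$ all the way up to the boundary $\partial M$, where the weight $\varphi$ itself degenerates (since $\varphi \in H_0^1$). Near $\partial M$ one must combine the interior doubling estimate with a Hopf-type lower bound $|\varphi(x)| \gtrsim d(x,\partial M)$ as in Proposition \ref{proposition1}, so that the boundary contribution to $\int \varphi^{-\delta}$ reduces to the integrability of $d(x,\partial M)^{-\delta}$, which holds for $\delta < 1$. Reconciling the interior frequency-function estimate with this boundary weight is the delicate point, and I would expect the proof to handle the boundary collar and the interior separately before patching them together.
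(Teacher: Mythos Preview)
Your interior strategy is essentially the paper's: establish a doubling inequality for $\varphi$, combine it with a local sup-by-average bound (the paper gets this from a Harnack inequality for the subsolution $|\varphi|^2$), and then run a covering argument to conclude $\varphi^{-\delta}\in L^1$. The paper packages this last passage by citing \cite[Theorem 4.2]{CT}; your layer-cake/Vitali description is the same mechanism.

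The genuine gap is at the boundary. You propose to invoke a Hopf-type lower bound $|\varphi(x)|\gtrsim d(x,\partial M)$ as in Proposition~\ref{proposition1}. That proposition, however, relies on Hopf's maximum principle for a \emph{nonnegative} supersolution; here $\varphi$ is an eigenfunction of $-\Delta + (q+a\mu+\mu^2)$ with $\mu$ typically complex, so $\varphi$ is complex-valued and, even in the real case, has a nodal set in general. The estimate $|\varphi(x)|\gtrsim d(x,\partial M)$ is simply false for such $\varphi$ (think of any higher Dirichlet eigenfunction, whose nodal set may meet $\partial M$), so the boundary collar cannot be handled this way and the integrability of $d(\cdot,\partial M)^{-\delta}$ is irrelevant.

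What the paper does instead is push the doubling inequality itself up to the boundary: in boundary charts it invokes the Adolfsson--Escauriaza boundary doubling estimate \cite{AE} for solutions of $Lu=0$ vanishing on the flat part, together with a boundary Harnack inequality for the subsolution $|u|^2$ from \cite[Theorem 9.26]{GT}. An even reflection across $\{x_n=0\}$ then produces a function on the full ball satisfying interior-type doubling and sup-by-average bounds, after which the same covering argument as in the interior gives $\varphi^{-\delta}\in L^1$ on a boundary neighborhood. Patching boundary charts with the interior finishes the proof. If you revise, replace the Hopf step by this boundary-doubling/reflection argument.
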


The proof of this proposition is given in the end of this section.

\begin{lemma}\label{lemma3.1}
(Weighted interpolation inequality)
There exists a constant $C$, that can depend on $\varphi$, so that for any $f\in L^\infty (\ring{M})$, we have
\[
\|f\|_{L^2(\ring{M})}\le C\|f\|_{L^\infty (\ring{M})}^{\frac{2}{2+\delta}}\|f\varphi\|_{L^2(\ring{M})}^{\frac{\delta}{2+\delta}}.
\]
Here $\delta$ is as in Proposition \ref{proposition3.1}.
\end{lemma}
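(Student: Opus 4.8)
The plan is to prove the weighted interpolation inequality
\[
\|f\|_{L^2(\ring{M})}\le C\|f\|_{L^\infty (\ring{M})}^{\frac{2}{2+\delta}}\|f\varphi\|_{L^2(\ring{M})}^{\frac{\delta}{2+\delta}}
\]
by splitting the integral $\int_{\ring M}|f|^2\,dV$ according to the size of $\varphi$, using Proposition \ref{proposition3.1} to control the region where $\varphi$ is small and the trivial bound $|f|^2\le|f\varphi|^2/\varphi^2$ on the region where $\varphi$ is large, and then optimizing over the threshold. Let me set up the splitting. For a parameter $\lambda>0$ to be chosen, write
\[
\int_{\ring M}|f|^2\,dV=\int_{\{|\varphi|<\lambda\}}|f|^2\,dV+\int_{\{|\varphi|\ge\lambda\}}|f|^2\,dV.
\]

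For the first integral, I would bound $|f|^2\le\|f\|_{L^\infty}^2$ and use that $\{|\varphi|<\lambda\}\subset\{|\varphi|^{-\delta}>\lambda^{-\delta}\}$, so that by Chebyshev's inequality and Proposition \ref{proposition3.1},
\[
\mathrm{Vol}\{|\varphi|<\lambda\}\le\lambda^\delta\int_{\ring M}|\varphi|^{-\delta}\,dV=:\lambda^\delta\,K,
\]
with $K=\|\varphi^{-\delta}\|_{L^1(\ring M)}<\infty$ finite by the Proposition. This gives a bound $\|f\|_{L^\infty}^2\lambda^\delta K$ on the first piece. For the second integral, on $\{|\varphi|\ge\lambda\}$ I have $|f|^2\le\lambda^{-2}|f\varphi|^2$, hence this piece is at most $\lambda^{-2}\|f\varphi\|_{L^2(\ring M)}^2$. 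Combining,
\[
\|f\|_{L^2(\ring M)}^2\le K\,\|f\|_{L^\infty}^2\,\lambda^\delta+\lambda^{-2}\|f\varphi\|_{L^2}^2.
\]

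It remains to optimize in $\lambda$. Treating the right-hand side as $c_1\lambda^\delta+c_2\lambda^{-2}$ with $c_1=K\|f\|_{L^\infty}^2$ and $c_2=\|f\varphi\|_{L^2}^2$, the minimizing $\lambda$ solves $\delta c_1\lambda^{\delta-1}=2c_2\lambda^{-3}$, i.e.\ $\lambda^{\delta+2}=\tfrac{2c_2}{\delta c_1}$, and plugging back yields a bound of the form $C\,c_1^{2/(\delta+2)}c_2^{\delta/(\delta+2)}$. Substituting $c_1,c_2$ gives
\[
\|f\|_{L^2(\ring M)}^2\le C\,\|f\|_{L^\infty}^{\frac{4}{2+\delta}}\|f\varphi\|_{L^2}^{\frac{2\delta}{2+\delta}},
\]
and taking square roots produces exactly the claimed inequality, with $C$ depending on $K$ and hence on $\varphi$ (and on $\delta$), as asserted. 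I do not expect any serious obstacle here: the whole argument is the standard good-lambda splitting, and the only nontrivial input is the integrability $\varphi^{-\delta}\in L^1(\ring M)$, which is supplied by Proposition \ref{proposition3.1}. The one point requiring a little care is ensuring the optimizing $\lambda$ is admissible (positive, and not forcing a vacuous set), and handling the degenerate cases $\|f\varphi\|_{L^2}=0$ or $f\equiv0$ separately, but these are routine.
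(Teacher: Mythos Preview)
Your proof is correct. The paper takes a slightly different but equally short route: instead of a level-set splitting, it applies H\"older's inequality directly to $\int |f|^{\alpha}\,dV=\int |f\varphi|^{\alpha}|\varphi|^{-\alpha}\,dV$ with the exponent $\alpha=\tfrac{2\delta}{2+\delta}$ chosen so that the two factors land in $L^{(2+\delta)/\delta}$ and $L^{(2+\delta)/2}$ respectively; this yields $\|f\|_{L^{\alpha}}\le \|f\varphi\|_{L^{2}}\,\|\varphi^{-\delta}\|_{L^{1}}^{1/\delta}$, and then the elementary interpolation $\|f\|_{L^{2}}\le \|f\|_{L^{\infty}}^{(2-\alpha)/2}\|f\|_{L^{\alpha}}^{\alpha/2}$ finishes the job. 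Your Chebyshev-plus-optimization argument is the ``by hand'' version of the same mechanism: the optimization over $\lambda$ effectively recovers the H\"older balance, and the Chebyshev step is what H\"older gives when one of the factors is an indicator. Neither approach is more general here; the paper's is a touch cleaner since no optimization or degenerate-case bookkeeping is needed, while yours makes the role of the sublevel sets of $|\varphi|$ more transparent.
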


\begin{proof}
Set $p=\frac{2}{\delta}$ and $\alpha =\frac{2}{p}=\frac{2\delta}{2+\delta}$. Therefore, the exponent conjugate to $p$, $p^\ast =\frac{2+\delta}{2}$ and $\alpha p^\ast=\delta$. We get by applying H\"older's inequality
\[
\int_{\ring{M}}|f|^\alpha dV=\int |f\varphi |^\alpha |\varphi ^{-\alpha}|dV\le \||f\varphi |^\alpha\|_{L^p(\ring{M})}\||\varphi ^{-\alpha}|\|_{L^{p^\ast}(\ring{M})}=\|f\varphi\|_{L^2(\ring{M})}^{\frac{2}{p}}\|\varphi ^{-\delta}\|_{L^1(\ring{M})}^{1/p^\ast}.
\]
Whence
\begin{equation}\label{3.1}
\|f\|_{L^\alpha (\ring{M})}\le \|f\varphi\|_{L^2(\ring{M})}\|\varphi ^{-\delta}\|_{L^1(\ring{M})}^{1/\delta}.
\end{equation}
On the other hand
\begin{equation}\label{3.2}
\|f\|_{L^2(\ring{M})}\le \|f\|_{L^\infty (\ring{M})}^{\frac{2-\alpha}{2}}\|f\|_{L^\alpha (\ring{M})}^{\frac{\alpha}{2}}.
\end{equation}
A combination of \eqref{3.1} and \eqref{3.2} yields
\[
\|f\|_{L^2(\ring{M})}\le C\|f\|_{L^\infty (\ring{M})}^{\frac{2}{2+\delta}}\|f\varphi\|_{L^2(\ring{M})}^{\frac{\delta}{2+\delta}}
\]
which is the expected inequality.
\end{proof}

We are now ready to complete the proof of Theorem \ref{theorem3.1}. Similarly as in the proof of the completion of Theorem \ref{theorem1.1}, we have, by taking $(u_0,u_1)=\phi_{q,a,k}$,
\[
\|\varphi (\widetilde{q}-q)\|_{L^2(\ring{M})}+\|\varphi (\widetilde{a}-a)\|_{L^2(\ring{M})}\le C\|\Lambda_{\widetilde{q},\widetilde{a}}-\Lambda_{q,a}\|.
\]
According to the weighted interpolation inequality in Lemma \ref{lemma3.1}, this inequality entails
\[
\|\widetilde{q}-q\|_{L^2(\ring{M})}+\|\widetilde{a}-a\|_{L^2(\ring{M})}\le C\|\Lambda_{\widetilde{q},\widetilde{a}}-\Lambda_{q,a}\|^{\frac{\delta}{2+\delta}}.
\]

%%%%%%%%%%%%%%%%%%%%%%%%%%%%
\begin{proof}[Proof of Proposition \ref{proposition3.1}]

{\it First step.} Denote by $B$ the unit ball of $\mathbb{R}^n$ and let $B_+=B\cap \mathbb{R}_+^n$, with $\mathbb{R}_+^n=\{ x=(x',x_n)\in \mathbb{R}^n;\; x_n >0\}$. Let $L$ be a second order differential operator acting as follows
\[
Lu=\partial _j(a_{ij}\partial _i u )+ C\cdot \nabla u+du.
\]
Assume that $(a_{ij})$ is a symmetric matrix with entries in
$C^1(2\overline{B_+} )$, $C\in L^\infty (2B_+)^n$ is real
valued 
and $d\in L^\infty (2B_+)$ is complex valued. Assume moreover that
\[
a_{ij}(x)\xi _j\cdot \xi_j\ge \kappa_0 |\xi |^2,\;\; x\in 2B_+,\; \xi \in \mathbb{R}^n,
\]
for some $\kappa_0 >0$.

\smallskip
Let $u\in W^{2,n}(2B_+)\cap C^0(2\overline{B_+})$ be a weak solution of $Lu=0$
satisfying $u=0$ on $\partial (2B_+)\cap \overline{\mathbb{R}^n_+}$ and $|u|^2\in W^{2,n}(2B_+)\cap C^0(2\overline{B_+})$.
\smallskip
From \cite[Theorem 1.1, page 942]{AE}, there exists a constant $C$, that can
depend on $u$, so that the following doubling inequality at the boundary
\begin{equation*}
\int_{B_{2r}\cap B_+}{|u|}^2dx \le C\int_{B_r\cap B_+}{|u|}^2dx,
\end{equation*}
holds for any ball $B_{2r}$, of radius $2r$, contained in $2B$.

\smallskip
On the other hand  simple calculations yield, where $v= \Re u$ and $w=\Im u$,
\[
\partial _j(a_{ij}\partial _i |u|^2 )+ 2C\cdot \nabla |u|^2+4(|\Re d|+|\Im d|)|u|^2\ge 2a_{ij}\partial_iv\partial_jv+2a_{ij}\partial_iw\partial_jw \ge 0\;\; \mbox{in}\; 2B_+
\]
and $|u|^2=0$ on $\partial (2B_+)\cap
\overline{\mathbb{R}^n_+}$.

\smallskip
Harnak's inequality at the boundary (see \cite[Theorem 9.26, page 250]{GT}) entails 
\begin{equation*}
\sup_{B_r \cap B_+}{|u|}^2\le \frac{C}{|B_{2r}|}\int_{B_{2r}\cap B_+}
{|u|}^2dx,
\end{equation*}
for any ball $B_{2r}$, of radius $2r$, contained in $2B$.

\smallskip
Define $\widetilde{u}$  by
\[
\widetilde{u} (x',x_n)=u(x',x_n)\;\; \mbox{if}\; 
(x',x_n)\in 2B_+ ,\quad \widetilde{u} (x',x_n)=u(x',-x_n)\; \mbox{if}\; (x',-x_n)\in 2B_+.
\]
Therefore  $\widetilde{u} $ belongs  to $H^1(2B)\cap L^\infty(2B)$ and satisfies 
\begin{eqnarray}
\int_{B_{2r}}|\widetilde{u}|^2dx \le C\int_{B_r}|\widetilde{u}|^2dx,\label{3.3}\\
\sup_{B_r}|\widetilde{u}|^2\le \frac{C}{|B_{2r}|}\int_{B_{2r}}
{|\widetilde u|}^2dx,\label{3.4+}
\end{eqnarray}
 for any ball $B_{2r}$, of radius $2r$, contained in $2B$.

\smallskip
Inequalities \eqref{3.3} and \eqref{3.4+} at hand, we mimic the proof of \cite[Theorem 4.2, page 1784]{CT} in order to obtain that $\widetilde{u}^{-\delta}\in L^1(B)$ for some $\delta >0$, that can depend on $u$. Whence $u^{-\delta}\in L^1(B_+)$.

\smallskip
{\it Second step.} As $\partial M$ is compact, there exists a finite cover $(U_\alpha )$ of $\partial M$  and  $C^\infty$-diffeomorphisms  $f_\alpha: U_\alpha \rightarrow 2B$ so that $f_\alpha (U_\alpha \cap \ring{M})=2B_+$, $f_\alpha (U_\alpha \cap \partial M)=2B \cap \overline{\mathbb{R}^n_+}$ and, for any $x\in \partial M$, $x\in V_\alpha =f_\alpha^{-1}(B)$, for some $\alpha$. Then $u_\alpha =\varphi \circ f_\alpha ^{-1}$ satisfies $L_\alpha u_\alpha =0$ in $2B$ and $u=0$ on $\partial (2B_+)\cap \overline{\mathbb{R}^n_+}$ for some $L=L_\alpha$ obeying to the conditions of the first step. Hence $u_\alpha ^{-\delta _\alpha}\in L^1(B_+)$ and then $\varphi ^{-\delta _\alpha}\in L^1(V_\alpha )$. Let $V$ the union of $V_\alpha$'s. Since $u\in L^\infty (V)$, which is a consequence of \eqref{3.4+}, $u^{-\delta_0}\in L^1(V)$ with $\delta_0=\min \delta_\alpha$. Next, let $\epsilon$ sufficiently small in such a way that $M\setminus M_\epsilon\subset V$, where $M=\{ x\in M;\; \mbox{dist}(x,\partial M)>\epsilon \}$. Proceeding as previously  it is not hard to get that there exists $\delta _1$ so that $\varphi ^{-\delta _1}\in L^1(M_{\epsilon/2})$. Finally, as it is expected, we derive that $\varphi ^{-\delta}\in L^1(\ring{M} )$ with $\delta =\min (\delta_0,\delta_1)$.

\end{proof}
%=========================================================


\begin{thebibliography}{99}

\bibitem{AE}{\sc V. Adolfsson} and {\sc 
 L. Escauriaza}, {\em $C^{1, \alpha}$ domains 
and unique continuation at the boundary}, Commun. 
Pure Appl. Math.  50 (10) (1997), 935-969.

 
\bibitem{ASTT} {\sc C. Alves}, {\sc A.-L. Silvestre}, {\sc T. Takahashi} and {\sc M. Tucsnak},
{\em Solving inverse source problems using observability. Applications to the Euler-Bernoulli plate equation}, SIAM J. Control Optim. 48 (2009), 1632-1659.

\bibitem{AC1}{\sc K. Ammari} and {\sc M. Choulli}, {\em Logarithmic stability in determining two coefficients in a dissipative wave equation. Extensions to clamped Euler-Bernoulli beam and heat equations}, J. Differential Equations. 259 (7) (2015), 3344-3365.

\bibitem{AC2}{\sc K. Ammari} and {\sc M. Choulli}, {\em Determining a boundary coefficient in a dissipative wave equation: uniqueness and directional Lipschitz stability}, arXiv:1503.04528.

\bibitem{AC3}{\sc K. Ammari} and {\sc M. Choulli}, {\em Logarithmic stability in determining a boundary coefficient in an ibvp for the wave equation}, arXiv:1505.07248.

\bibitem{ACT}{\sc K. Ammari}, {\sc M. Choulli} and {\sc F. Triki}, {\em Determining the potential in a wave equation without a geometric condition. Extension to the heat equation}, Proc. Amer. Math. Soc. 144 (10) (2016), 4381-4392.

\bibitem{BLR} {\sc C. Bardos, G. Lebeau and J. Rauch}, {\em Sharp sufficient conditions for the observation, control, and stabilization of waves from the boundary}, SIAM J. Control Optim.  30 (1992), 1024-1065.

\bibitem{BY}{\sc M. Bellassoued} and {\sc M. Yamamoto}, {\em Carleman estimates for anisotropic hyperbolic systems in Riemannian manifolds and applications}, Lecture Notes in Mathematical Sciences The University of Tokyo, 2012.

%\bibitem{ch} {\sc M. Choulli}, {\em Analyse fonctionnelle - \'equations aux d\'eriv\'ees partielles}, Vuibert, Paris, 2014.

\bibitem{CT}{\sc M. Choulli} and {\sc F. Triki}, {\em New stability  estimates for the inverse medium problem with internal data}, SIAM J. Math. Anal. 47 (3) (2015) 1778-1799.

\bibitem{DL} {\sc R. Dautray} et {\sc J.-L. Lions}, {\em Analyse
Math\'ematique et Calcul Num\'erique}, Vol. VIII, Masson, Paris, 1985.

\bibitem{Da} {\sc E.B. Davies}, {Heat kernels and spectral theory}, Cambridge University Press, 1989.

\bibitem{GT}{\sc D. Gilbarg} and {N.S. Trudinger}, {\em Elliptic partial differential equations of second order}, Springer-Verlag, Berlin, 2001.

\bibitem{GK} {\sc I. C.  Gohberg}  and {\sc M. G. Krein,} {\em
 Introduction to the theory of linear
non self-adjoint operators,} AMS, Providence RI, 1969.


\bibitem{Le} {\sc G. Lebeau}, {\em Control for hyperbolic equations. Analysis and optimization of systems: state and frequency domain approaches for infinite-dimensional systems}, 160-183, Lecture Notes in Control and Inform. Sci., 185, Springer, Berlin, 1993.

 %\bibitem[MV]{MV} {\sc E. Malinnikova and S. Vessella}.  {\em Qualitative uniqueness for elliptic equations with singular lower order terms},  Math. Ann. 353 (4) (2012), 1157-1181. 

\bibitem{Pe}{\sc P. Petersen}, {\em Riemannian geometry}, Springer-Verlag, 2nd edition, 2006.

\bibitem{PS} {\sc P. Pucci} and {\sc J.  Serrin}, {\em The strong maximum principle revisited}, J. Differential Equations 196 (1) (2004), 1-66 ; {\em Erratum}, J. Differential Equations 207 (1) (2004), 226-227.

\bibitem{Ra}{\sc N. Rautenberg}, {\em A Hardy inequality on Riemannian manifolds and a classification of discrete Dirichlet spectra}, arXiv:1401.5010.

\bibitem{TW} {\sc M. Tucsnak and G. Weiss}, {\em Observation and control for operator semigroups.} Birkh\"auser Advanced Texts, Birkh\"auser Verlag, Basel, 2009.

\end{thebibliography}
\end{document}